\newtheorem{theorem}{Theorem}
\newtheorem{definition}{Definition}[section]
\newtheorem{lemma}{Lemma}[section]
\newtheorem{remark}{Remark}[section]
\numberwithin{equation}{section}
\begin{document}

\title {\bf New dissipated energy for the unstable thin film equation}

\author{{\normalsize\bf Marina Chugunova, Roman M. Taranets}
\smallskip}

\def\lhead{R.M. Taranets}

\def\rhead{Cahn-Hilliard}

\date{\today}

\maketitle

\setcounter{section}{0}

\begin{abstract}
The fluid thin film equation $h_t = - (h^n h_{xxx})_x - a_1\,(h^m
h_x)_x$ is known to conserve mass $\int\,h \, dx$, and in the case
of $a_1 \leq~0$, to dissipate entropy $\int\,h^{3/2 - n}\,dx$ (see
\cite{BertozziBrenner}) and the $L^2$-norm of the gradient
$\int\,h_x^2\,dx$ (see \cite{B8}). For the special case of $a_1 = 0$
a new dissipated quantity $\int\, h^{\alpha}\,h_x^2\,dx $ was
recently discovered for positive classical solutions by Laugesen
(see \cite{Laugesen}). We extend it in two ways. First, we prove
that Laugesen's functional dissipates strong nonnegative generalized
solutions. Second, we prove the full $\alpha$-energy
$\int\,\bigl(\tfrac{1}{2} \,h^\alpha \, h_x^2\, - $ $ \tfrac
{a_1\,h^{\alpha + m - n + 2}}{(\alpha + m - n + 1)(\alpha + m - n +
2)} \bigr)\, dx $ dissipation for strong nonnegative generalized
solutions in the case of the unstable porous media perturbation
$a_1> 0$ and the critical exponent $m = n+2$.
\end{abstract}

\textbf{2000 MSC:} {35K55, 35K35, 35Q35, 76D08}

\textbf{keywords:} {fourth-order degenerate parabolic equations,
thin liquid films, energy, entropy }

\section{Introduction} \label{A}

It is well known that analysis of the existence, uniqueness and
regularity of weak solutions for nonlinear evolution equations
relies heavily on a priori estimates. Often, the physical energy or
entropy which originate from the related model can provide
non-increasing in time quantities. Unfortunately, it is far from
obvious how to construct new non-increasing Lyapunov type
functionals. A general algebraic approach to the construction of
entropies in higher-order nonlinear PDEs can be found in \cite{JM}
and can be applied to analyse thin film equations with stabilizing
porus media type perturbations. In this paper, inspired by
Laugesen's result \cite{Laugesen} on dissipation, we prove that the
energy functional introduced in \cite{Laugesen} dissipates strong
nonnegative generalized solutions. However, our method of the proof
is only applicable to some subset of the Laugsen's dissipation
region \cite{Laugesen} (see the shaded area on Figure~1).

We study the longwave-unstable generalized thin film equation
\begin{equation}
\label{A.gen.thin-film} h_t = - (h^n\,h_{xxx})_x -
a_1\,(h^m\,h_x)_x,
\end{equation}
where $h(x,t)$ gives the height of the evolving free-surface. The
exponent $n$ plays a stabilizing role due to fourth-order forward
diffusion term and the exponent $m$ plays a destabilizing role due
to backward second-order diffusion term for the case when $a_1 > 0$.
This class of equations originates from many physical/industrial
applications involving air-fluid interface. For example: the case
$n=1, \quad m=1$ describes a thin jet in a Hele-Shaw cell
\cite{Constantin}, the case $n = 3, \quad m = -1$ describes Van der
Waals driven rupture of thin films \cite{wit1}, the case $ m = n =
3$ describes shape of fluid droplets hanging from a ceiling
\cite{Enrhard}, and the case $n = 0, \quad m = 1$ describes
solidification of a hyper-cooled melt (this is a modified
Kuramoto-Sivashinsky equation) \cite{Bernoff}.

To prove that the nonnegativity property is preserved in nonlinear
thin film equation $h_t = -(h^n\,h_{xxx})_x$ for $n \geq 1$ (case
$a_1=0$) Bernis and Friedman \cite{B8} used set of dissipated and
conserved quantities: mass conservation $\int\,h\,dx = M$, surface
energy dissipation $\frac{d}{dt} \int {h_x^2\,dx} \leq 0$, and
entropy dissipation $\frac{d}{dt} \int{h^{2-n}\,dx} \leq 0$. The new
so-called $\beta$-entropy $\int\,h^{2-n +\beta}\,dx$ was introduced
by Bertozzi and Pugh \cite{BertPugh1996} and independently and
simultaneously by Beretta, Bertsch, Dal~Passo \cite{B2} to extend
this result to $n > 0$. They also successfully used this new entropy
to obtain exponential with respect to the $L^{\infty}$-norm
convergence toward the mean value steady state solution. To analyse
this convergence rate in $H^1$-norm for the special case $n=1,\  a_1
= 0$ Carlen and Ulusoy \cite{CarUl} used the dissipated energy $\int
{h^{\alpha}\,h_x^2\,dx} $ constructed by Laugesen \cite{Laugesen}
for classical positive solutions. Exponential asymptotic convergence
toward the mean value was also studied  by Tudorascu in
\cite{Tudorascu}. This list of connections between new properties of
solutions in thin film PDEs proved by means of newly discovered
dissipated quantities is far from complete.

In this paper we prove that there exists a subinterval $I$ of $ - 1
< \alpha <~1$ ($I$ depends on $n$ only) and a nonnegative strong
generalized solution such that for any $\alpha \in I$ the full
$\alpha$-energy
$$ \mathcal{E}_{0}^{(\alpha)}(t) =
\int\limits_{\Omega} {\left(\tfrac{1}{2} \,h^\alpha \, h_x^2\, -
\tfrac {a_1\,h^{\alpha + m - n + 2}}{(\alpha + m - n + 1)(\alpha +
m - n + 2)} \right)\, dx}
$$
dissipates. For the unstable porus media perturbation case $a_1 > 0$
this dissipation is proven under the assumptions that the total mass
of the solution is less than or equal to the critical one, $m = n+2$
and domain $\Omega$ is unbounded or $h$ is compactly supported. For
the stable case $a_1 \leq 0$ no such assumptions are needed.

We proceed as follows.  First, we show the dissipation for the
classical solutions of the regularized problem and then we take this
dissipation to the limit. We prove dissipation of the full
$\alpha$-energy for positive classical solutions of the regularized
problem for any value of the coefficient $a_1$ and without any
additional assumptions about the total mass of the solution or its
support. However our method of taking the dissipation to the limit
due to the Bernis-Friedman method of regularization requires
additional conditions for the case $a_1 > 0$.

\section{Auxiliary results to generalized weak solutions} \label{B}

We consider nonnegative weak  solutions to the following
initial--boundary  problem:
\begin{numcases}
{(\textup{P})}
 h_t  + \left(
{h^n h_{xxx} + a_1 h^m h_x } \right)_x = 0 \text{ in }Q_T, \hfill \label{B:1}\\
\tfrac{\partial^{i} h}{\partial x^i}(-a,t) = \tfrac{\partial^{i}
h}{\partial x^i}(a,t) \text{ for }
t > 0,\,i=\overline{0,3}, \hfill \label{B:2}\\
\qquad  \qquad h(0,x) = h_0 (x) \geqslant 0, \hfill \label{B:3}
\end{numcases}
where $h= h(t,x)$, $\Omega = (-a, a)$, $Q_T = (0,T) \times \Omega$,
$n > 0$, $m > 0$,  and $a_1 \in \mathbb{R}^1$. We define a
generalized weak solution in the Bernis-Friedman sense (see, e.\,g.
\cite{B2,B8}).

\begin{definition}[generalized weak solution] \label{B:defweak}
Let $n > 0$, $m > 0$, and $a_1 \in \mathbb{R}^1$. A generalized
weak solution of problem $($P$)$ is a function $h$ satisfying
\begin{align}
& \label{weak1}
h \in C^{1/2,1/8}_{x,t}(\overline{Q}_T) \cap L^\infty (0,T; H^1(\Omega )),\\
& \label{weak-d} h_t \in L^2(0,T; (H^1(\Omega))'),\\
& \label{weak2} h \in C^{4,1}_{x,t}(\mathcal{P}), \,\,\,
h^{\frac{n}{2}} ( h_{xxx} + a_1 h^{m-n} h_x ) \in
L^2(\mathcal{P}), \,\,
\end{align}
where $\mathcal{P} = \overline{Q}_T \setminus ( \{h=0\} \cup
\{t=0\})$ and $h$ satisfies (\ref{B:1}) in the following sense:
\begin{equation}\label{integral_form}
\int\limits_0^T \langle h_t(\cdot,t), \phi \rangle \; dt -
\iint\limits_{\mathcal{P}} {h^n ( h_{xxx} + a_1 h^{m-n}
h_x)\phi_x\,dx dt } \ = 0
\end{equation}
for all $\phi \in C^1(Q_T)$ with $\phi(-a,\cdot) = \phi(a,\cdot)$;
\begin{align}
&  \label{ID1} h(\cdot,t) \to h(\cdot,0) = h_0
\mbox{  pointwise \& strongly in $L^2(\Omega)$ as $t \to 0$}, \\
& \label{BC1} h(-a,t)=h(a,t) \; \forall t \in [0,T] \; \mbox{and}
\; \tfrac{\partial^{i}
h}{\partial x^i}(-a,t) = \tfrac{\partial^{i}h}{\partial x^i}(a,t)  \\
& \notag \mbox{for} \; i = \overline{1,3} \; \mbox{at all points
of the lateral boundary where $\{h \neq 0 \}$.}
\end{align}
\end{definition}

Because the second term of (\ref{integral_form}) has an integral
over $\mathcal{P}$ rather than over $Q_T$, the generalized weak
solution is ''weaker'' than a standard weak solution. Also note that
the first term of (\ref{integral_form}) uses $h_t \in L^2(0,T;
(H^1(\Omega))' )$; this is different from the definition of weak
solution first introduced by Bernis and Friedman \cite{B8}; there,
the first term was the integral of $h \phi_t$. The proof of the
existence of generalized weak solutions follows the ideas of
\cite{B8,B2,BertPugh1996,BP1,BP2,T4}.

Let
\begin{equation}\label{C:Galpha}
G_{0}^{(\beta)} (z): = \left\{ \begin{gathered} \tfrac{z^{\beta
-n+ 2 }}{(\beta - n + 2)(\beta - n + 1)} \text{
if } \beta - n \neq \{ - 1, - 2 \}, \\
z \ln z - z  \text{ if } \beta - n = - 1 , \\
- \ln z \text{ if } \beta - n = - 2,
\end{gathered} \right.
\end{equation}
$(G^{(\beta)}_{0} (z))'' = z^{\beta - n}$, and $G_{0} (z):=
G_{0}^{(0)} (z)$.

\begin{theorem}\label{C:Th1}
Let $a_1 \in \mathbb{R}^1$, $n > 0$; $m \geqslant n/2 $ for $a_1
> 0$, and $m > 0$ for $a_1 \leq 0$.

(a) [Existence.] Let the nonnegative initial data $h_0 \in
H^1(\Omega)$ satisfy
\begin{equation}\label{C:inval}
\int\limits_{\Omega} {G_0(h_0(x)) \,dx} < \infty,
\end{equation}
and either 1) $h_0(-a) = h_0(a) = 0$ or  2) $h_0(-a) = h_0(a) \neq
0$ and $ \tfrac{\partial^{i} h_0}{\partial x^i}(-a) =
\tfrac{\partial^{i}h_0}{\partial x^i}(a) \text{ holds for
}i=1,2,3$. Then for some time $T_{loc}>0$ there exists a
nonnegative generalized weak solution, $h$, on $Q_{T_{loc}}$ in
the sense of the definition \ref{B:defweak}.  Furthermore,
\begin{equation} \label{Linf_H2}
h \in L^2(0,T_{loc};H^2(\Omega)).
\end{equation}
Let
\begin{equation} \label{Energy}
\mathcal{E}_0(T) := \int\limits_{\Omega} { \{\tfrac{1}{2}
h_{x}^2(x,T) - a_1 D_0(h(x,T)) \}\,dx},
\end{equation}
where $D_0(z):= \frac{z^{m - n +2}}{(m - n + 1)(m- n + 2)}$. Then
the weak solution satisfies
\begin{equation}\label{C:d2'}
\mathcal{E}_0(T) + \iint\limits_{\{h >0 \}} {h^n (h_{xxx} + a_1
h^{m - n}h_x)^2 \,dx dt} \leqslant \mathcal{E}_0(0),
\end{equation}
\begin{equation}\label{C:ddd2'}
\iint\limits_{\{h >0 \}} {h^n h^2_{xxx}\,dx dt} \leqslant \,\,
const < \infty.
\end{equation}
for all $T \leq T_{loc}$. The time of existence, $T_{loc}$, is
determined by $a_1$, $| \Omega |$, $\int h_0$, $\| h_{0x} \|_2$,
and $\int G_0(h_0)$. Moreover, $T_{loc} = + \infty $ for $a_1 \leq
0$.

(b) [Regularity.] If the initial data from (a) also satisfies
$$
\int\limits_{\Omega} {G^{(\beta)}_{0}(h_0(x)) \,dx} <\infty
$$
for some $-1/2 < \beta < 1, \  \beta \neq 0$ then there exists $0 <
T_{loc}^{(\beta)} \leq T_{loc}$ such that the nonnegative
generalized weak solution has the extra regularity
\begin{equation}\label{Linf_H2-2}
h^{\tfrac{\beta + 2}{2}} \in L^{2}(0, T_{loc}^{(\beta)};
H^2(\Omega)) \text{ and } h^{\tfrac{\beta + 2}{4}} \in L^{2}(0,
T_{loc}^{(\beta)}; W^1_4(\Omega)).
\end{equation}
The time of existence, $T_{loc}^{(\beta)}$, is determined by
$a_1$, $| \Omega |$, $\int h_0$, $\| h_{0x} \|_2$, and $\int
G^{(\beta)}_{0}(h_0)$. Moreover, $T_{loc}^{(\beta)} = + \infty $
for $a_1 \leq 0$.
\end{theorem}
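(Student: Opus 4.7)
The plan is to carry out a Bernis--Friedman--type construction, extended to the unstable regime as in \cite{BertPugh1996,BP1,BP2,T4}. First, I would regularize the degenerate mobility $h^n$ by $f_\varepsilon(z) = z^{n+4}/(\varepsilon + z^4)$ and lift the initial data to a strictly positive $h_{0,\varepsilon} = h_0 + \sigma(\varepsilon)$ with $\sigma(\varepsilon) \to 0$, so that the modified problem admits a positive classical solution $h_\varepsilon \in C^{4,1}_{x,t}$ on some time interval by standard fourth-order parabolic theory under the compatibility conditions coming from (\ref{B:2})--(\ref{B:3}). All subsequent arguments take place at the regularized level until the final passage to the limit.

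The core of part (a) is deriving uniform-in-$\varepsilon$ a priori bounds. Testing the regularized equation against $-h_{\varepsilon,xx}$ and against $G_0'(h_\varepsilon)$ and integrating by parts produces, respectively, an energy identity of the form
$$\frac{d}{dt}\mathcal{E}_0^{\varepsilon}(t) + \int_\Omega f_\varepsilon(h_\varepsilon)\bigl(h_{\varepsilon,xxx}+a_1 h_\varepsilon^{m-n} h_{\varepsilon,x}\bigr)^2\,dx = 0$$
and an entropy identity controlling $\int G_0(h_\varepsilon)\,dx$ together with $\iint h_{\varepsilon,xx}^2\,dx\,dt$, the latter giving (\ref{Linf_H2}) after the limit. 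For $a_1\leq 0$ the potential term in $\mathcal{E}_0^{\varepsilon}$ has the favorable sign and these bounds are global. For $a_1>0$ the destabilizing term $\int h_\varepsilon^{m-n+2}\,dx$ must be interpolated against $\|h_{\varepsilon,x}\|_2$ and $\int h_\varepsilon\,dx$ via Gagliardo--Nirenberg; the hypothesis $m\geq n/2$ keeps this power subcritical and produces a differential inequality for $\|h_{\varepsilon,x}\|_2^2$ yielding local existence up to a $T_{loc}$ depending only on the quantities listed.

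With uniform control in $L^\infty_t H^1_x \cap L^2_t H^2_x$ together with $h_{\varepsilon,t} \in L^2_t (H^1(\Omega))'$, Aubin--Lions gives a subsequence converging to a limit $h$ in $C^{1/2,1/8}_{x,t}(\overline{Q}_T)$, the H\"older exponents coming from $H^1\hookrightarrow C^{1/2}$ in one dimension together with interpolation in time. Nonnegativity of $h$ follows because the uniform entropy bound $\int G_0(h_\varepsilon)\,dx \leq C$ rules out negative limits. The most delicate step is passing to the limit in the nonlinear flux $f_\varepsilon(h_\varepsilon)(h_{\varepsilon,xxx}+a_1 h_\varepsilon^{m-n}h_{\varepsilon,x})$ in the weak formulation (\ref{integral_form}): I would use that on each compact subset of $\mathcal{P}=\overline{Q}_T\setminus(\{h=0\}\cup\{t=0\})$ the functions $h_\varepsilon$ are bounded uniformly away from zero, so the equation is uniformly parabolic there and classical Schauder estimates deliver $C^{4,1}_{x,t}$ convergence on such subsets, identifying the limit flux. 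The inequalities (\ref{C:d2'}) and (\ref{C:ddd2'}) then follow by weak lower semicontinuity on the left and Fatou's lemma on the dissipation integral.

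For part (b), the scheme is repeated with the additional test function $(G_0^{(\beta)})'(h_\varepsilon) = h_\varepsilon^{\beta-n+1}/(\beta-n+1)$. After integration by parts, the resulting identity controls $\int G_0^{(\beta)}(h_\varepsilon)\,dx$ together with a term $\int h_\varepsilon^{\beta} h_{\varepsilon,xx}^2\,dx$ and a quartic term in $h_{\varepsilon,x}$; the Bernis--Friedman algebraic rewriting of $\int h^\beta h_{xx}^2$ in terms of $\int(h^{(\beta+2)/2})_{xx}^2\,dx$ and $\int (h^{(\beta+2)/4})_x^4\,dx$, whose absorption constants are positive precisely in the stated range $-1/2<\beta<1$, $\beta\neq 0$, then produces the two bounds in (\ref{Linf_H2-2}). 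The hard part throughout remains the same one: the destabilizing term $a_1 h^m h_x$ must be absorbed by the fourth-order dissipation both at the level of the uniform estimates and during the passage to the limit, and it is this absorption that forces both the restriction $m\geq n/2$ and the short-time horizon.
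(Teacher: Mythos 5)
Your plan is the same Bernis--Friedman/Bertozzi--Pugh construction that the paper itself relies on: the paper does not actually prove Theorem~\ref{C:Th1} beyond citing \cite{B8,B2,BertPugh1996,BP1,BP2,T4} and sketching the regularization scheme in Section~\ref{RegularizedProblem} (there with a two-parameter regularization, $\delta$ for uniform parabolicity and smoothing of the data and $\varepsilon$ for raising the degeneracy to order $s\geq 4$, rather than your single $\varepsilon$; that difference is inessential). So at the level of strategy the proposal matches the intended proof. Two of your individual steps, however, would fail on part of the parameter range the theorem covers.

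First, you assign the hypothesis $m\geq n/2$ the job of keeping the Gagliardo--Nirenberg interpolation of $\int h_\varepsilon^{m-n+2}\,dx$ subcritical. Subcriticality there is governed by the comparison of $m$ with $n+2$ (critical at $m=n+2$, where the mass threshold $M_c$ appears), and the theorem permits $m>n+2$ when $a_1>0$; in that supercritical regime the destabilizing term cannot be absorbed into $\tfrac{1}{2}\|h_{\varepsilon,x}\|_2^2$ by Young's inequality, and one needs the local-in-time nonlinear differential inequality argument of \cite{BP2} to obtain a bound on a short interval $[0,T_{loc}]$. The actual role of $m\geq n/2$ is to make $\iint h^{2m-n}h_x^2\,dx\,dt$ finite where $h$ may vanish, which is exactly what is needed to pass from the dissipation integral in (\ref{C:d2'}) to the separate bound (\ref{C:ddd2'}). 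Second, nonnegativity of the limit cannot be read off from a uniform bound on $\int G_0(h_\varepsilon)\,dx$: for $0<n<2$ the function $G_0(z)\sim z^{2-n}$ is bounded near $z=0$ and carries no information. The correct mechanism is the $\varepsilon$-regularized entropy $G_\varepsilon$ with $G_\varepsilon''=1/f_\varepsilon$, which blows up at the origin at least like $\varepsilon z^{-2}$ because the regularization raises the degeneracy to order at least four; a uniform-in-time bound on $\int G_\varepsilon(h_\varepsilon)\,dx$ combined with the spatial $C^{1/2}$ modulus keeps $h_\varepsilon$ strictly positive (this is also what keeps your single-parameter regularized problem non-degenerate along its own solution, which your appeal to classical parabolic theory tacitly requires), and nonnegativity of $h$ then follows from uniform convergence. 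Both repairs are standard and are carried out in the cited references; with them the proposal is sound.
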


There is nothing special about the time $T_{loc}$ in the
Theorem~\ref{C:Th1}. In the case $a_1 >0$ and $n/2 \leq m < n +2$
(or $m = n+2$ and $M \leq M_c$), given a countable collection of
times in $[0,T_{loc}]$, one can construct a weak solution for which
these bounds will hold at those times. Also, we note that the
analogue of Theorem 4.2 in \cite{B8} also holds: there exists a
nonnegative weak solution with the integral representation
\begin{align} \label{alt_int}
& \int\limits_0^T \langle h_t(\cdot,t), \phi \rangle \; dt
+ \iint\limits_{Q_T} (n h^{n-1} h_x h_{xx} \phi_x + h^n h_{xx} \phi_{xx}) \; dx dt \\
& \hspace{1.5in} - a_1 \iint\limits_{Q_T} { h^m h_x \phi_x \; dx
dt} = 0. \notag
\end{align}

\section{Dissipation of energy for nonnegative weak solutions} \label{D}

The main result of the present paper is the following

\begin{theorem}\label{D:Th1}
Let $a_1 \in \mathbb{R}^1$, $1/2 < n < 3$; $m \geqslant n/2 $ for
$a_1 > 0$, and $m > 0$ for $a_1 \leq 0$, and
\begin{equation} \label{D:Energy-new0}
\mathcal{E}_{0}^{(\alpha)}(T) := \int\limits_{\Omega} {
\{\tfrac{1}{2} h^{\alpha} h_{x}^2(x,T) - a_1 \tilde{D}_{0}(h(x,T))
\}\,dx},
\end{equation}
where $\tilde{D}_{0}(z):= \frac{z^{\alpha +m -n+2}}{(\alpha +m
-n+1)(\alpha +m -n+2)}$, and $\mathcal{E}_{0}^{(0)}(T) =
\mathcal{E}_{0}(T)$. Then there exists a non-empty subinterval $I$
(see \cite{Laugesen} for the explicit form of the $I$) of $0 \leq
\alpha < 1$ for $\frac{1}{2} < n < 3$, and of $ \frac{3}{2} - n <
\alpha < 0$ for $\frac{3}{2} < n < 3$ such that for any $\alpha \in
I$ the nonnegative weak solution from Theorem~\ref{C:Th1} satisfies
the following estimates:

(i) if $a_1 \leqslant 0$ then
\begin{equation}\label{D:0}
\mathcal{E}^{(\alpha)}_{0}(T) \leqslant
\mathcal{E}^{(\alpha)}_{0}(0);
\end{equation}

(ii) if $a_1 > 0$ then
\begin{equation}\label{D:1}
\mathcal{E}^{(\alpha)}_{0}(T) \leqslant
\mathcal{E}^{(\alpha)}_{0}(0) + C_1 \iint \limits_{Q_T} {h^{\alpha
+ 3m - 2n + 2} dx dt}  \text{ for } m > n + 2;
\end{equation}
\begin{equation}\label{D:1-2}
\mathcal{E}^{(\alpha)}_{0}(T) \leqslant
\mathcal{E}^{(\alpha)}_{0}(0) +  T(C_1 M^{\frac{2\alpha + 5 m - 3n
+ 4}{n + 2 - m}} + C_2 M^{\alpha + 3m - 2n + 2})
\end{equation}
for $m < n + 2$ and $\alpha > 2n - 3m - 1$;
\begin{equation}\label{D:1-4}
\mathcal{E}^{(\alpha)}_{0}(T) \leqslant
\mathcal{E}^{(\alpha)}_{0}(0) +  C_3 T\, M^{\alpha + 3m - 2n + 2}
\end{equation}
for $m < n + 2$ and $2n - 3m - 2 < \alpha \leqslant 2n - 3m - 1$;
\begin{equation}\label{D:1-3}
\mathcal{E}^{(\alpha)}_{0}(T) \leqslant
\mathcal{E}^{(\alpha)}_{0}(0) +  C_2 T\, M^{\alpha + n + 8} \text{
for } m = n + 2 \text{ and } 0 < M \leqslant M_c.
\end{equation}
Here $C_2 = 0$ if $\Omega$ is unbounded or $h$ has compact support.
\end{theorem}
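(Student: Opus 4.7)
My plan follows the two-step scheme announced in Section~\ref{A}: first derive the dissipation identity for strictly positive classical solutions of a Bernis--Friedman regularization, then pass to the regularization limit using Theorem~\ref{C:Th1}(b) applied with $\beta = \alpha$.

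For a positive classical solution, differentiate $\mathcal{E}_0^{(\alpha)}$ in time, substitute $h_t = -(h^n h_{xxx}+a_1 h^m h_x)_x$, and integrate by parts twice (periodic boundary terms vanish). With $F := \tfrac{\alpha}{2}h^{\alpha-1}h_x^2 + h^\alpha h_{xx} + a_1 \tilde{D}_0'(h)$ and $\tilde{D}_0''(z)=z^{\alpha+m-n}$, the identity reads
\begin{equation*}
\frac{d}{dt}\mathcal{E}_0^{(\alpha)} \;=\; -\int_\Omega F_x\,\bigl(h^n h_{xxx}+a_1 h^m h_x\bigr)\,dx.
\end{equation*}
Expand the product: the terms quadratic in $(h_{xxx},\,a_1 h^{m-n}h_x)$ assemble exactly into the perfect square $h^{\alpha+n}(h_{xxx}+a_1 h^{m-n}h_x)^2$ (matching the flux square in \eqref{C:d2'}); the $a_1$-free mixed terms $\frac{\alpha(\alpha-1)}{2}h^{\alpha+n-2}h_x^3 h_{xxx}+2\alpha h^{\alpha+n-1}h_x h_{xx} h_{xxx}$, after a further integration by parts, become Laugesen's quadratic form in $(h_{xxx},\,h^{-1}h_x h_{xx},\,h^{-2}h_x^3)$ with weight $h^{\alpha+n}$, negative-semidefinite precisely when $\alpha\in I$ (see \cite{Laugesen}); and the remaining $a_1$-linear cross-terms reduce via $h_x^2 h_{xx}=\tfrac{1}{3}(h_x^3)_x$ to a single scalar multiple of $a_1\int h^{\alpha+m-2}h_x^4\,dx$. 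For the stable case $a_1\le 0$ this last piece is absent and \eqref{D:0} follows.

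For $a_1>0$, bound the residue $\int h^{\alpha+m-2}h_x^4\,dx$ by a H\"older/Young estimate against the Laugesen form and the scalar monomial $\int h^{\alpha+3m-2n+2}\,dx$; e.g.\ at the critical exponent $m=n+2$ one writes $\int h^{\alpha+n}h_x^4\le(\int h^{\alpha+n-4}h_x^6)^{2/3}(\int h^{\alpha+n+8})^{1/3}$, so the gradient part is absorbed into Laugesen's form (provided $\alpha$ lies in the interior of $I$) and only the scalar residue $C a_1\iint h^{\alpha+3m-2n+2}\,dx\,dt$ survives. For $m>n+2$ this residue is simply kept on the right, producing \eqref{D:1}. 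For $m<n+2$, interpolate $\|h\|_\infty$ by Gagliardo--Nirenberg between the mass $M=\int h\,dx$ and the energy $\int h^\alpha h_x^2\,dx$: in the range $\alpha>2n-3m-1$ the gradient term created by the interpolation is re-absorbed into the left-hand energy by an $\varepsilon$-Young splitting, giving \eqref{D:1-2}; in the narrower range $2n-3m-2<\alpha\le 2n-3m-1$ only the mass is needed, giving \eqref{D:1-4}. At $m=n+2$ the interpolation saturates, and the subcritical-mass hypothesis $M\le M_c$, with $M_c$ defined as the reciprocal of the sharp Gagliardo--Nirenberg constant, is exactly what keeps the destabilizing coefficient below the stabilizing one, yielding \eqref{D:1-3}; the vanishing of $C_2$ for unbounded $\Omega$ or compactly supported $h$ reflects the improvement of that sharp constant in those geometries.

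For the passage $\varepsilon\to 0$, Theorem~\ref{C:Th1}(b) with $\beta=\alpha$ supplies exactly the regularity needed: $h_\varepsilon^{(\alpha+2)/2}\in L^2(H^2)$ and $h_\varepsilon^{(\alpha+2)/4}\in L^2(W^1_4)$ are precisely what justify every integration by parts uniformly in $\varepsilon$ and allow one to pass to the limit termwise. Weak lower semicontinuity of $\int h^\alpha h_x^2=\tfrac{4}{(\alpha+2)^2}\int\bigl((h^{(\alpha+2)/2})_x\bigr)^2\,dx$ handles the left-hand energy; Aubin--Lions combined with the uniform $C^{1/2,1/8}(\overline{Q_T})$ bound from \eqref{weak1} controls the scalar residue on the right via dominated convergence. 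The main obstacle is the algebraic reorganization in step one --- verifying that all $a_1$-cross terms recombine exactly into the perfect square of \eqref{C:d2'}, leaving only the cross-monomial $h^{\alpha+m-2}h_x^4$ --- together with the matching of the sharp Gagliardo--Nirenberg constant with $M_c$ in the critical case $m=n+2$.
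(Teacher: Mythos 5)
Your overall architecture matches the paper's: a Laugesen-type sum-of-squares identity for positive classical solutions of the Bernis--Friedman regularization (the paper's Lemma~\ref{H:lem1}, where your residual $a_1\int h^{\alpha+m-2}h_x^4\,dx$ appears as the quantity $\mu N^2$), followed by the H\"older/Nirenberg/Young case analysis on $m$ versus $n+2$ that you describe (the paper's (\ref{E:2})--(\ref{E:6}), via Lemma~\ref{A.4} applied to $v=h^{(\alpha+n+2)/6}$; your reading of $C_2=0$ is essentially the vanishing of the additive constant $d_2$ in that lemma for unbounded domains, not an improvement of the multiplicative constant). However, two points in your limit process are genuine gaps. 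The first: you cannot take $\beta=\alpha$ in Theorem~\ref{C:Th1}(b), which requires $-1/2<\beta<1$, whereas the theorem's range of $\alpha$ reaches down to $3/2-n$, i.e.\ below $-1/2$ whenever $n>2$. The paper keeps $\beta\in(-1/2,1)$ as a free parameter decoupled from $\alpha$ and uses it only through the constraint $n>2-\alpha+\beta$ in the estimate (\ref{H:lp1-00}); this decoupling is what makes the negative-$\alpha$ half of the statement reachable.

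The second, and more substantive, omission: your sketch treats the regularized mobility as if it were exactly $h^n$. It is not; since $f'_{\varepsilon}(z)=nz^{-1}f_{\varepsilon}(z)+\varepsilon(s-n)z^{-(s+1)}f_{\varepsilon}^2(z)$, every integration by parts in the Laugesen algebra (the identities (\ref{H:4})--(\ref{H:5})) produces extra $\varepsilon$- and $\varepsilon^2$-weighted integrals $\varepsilon k_1\int h^{\alpha-s-4}f_{\varepsilon}^2h_x^6$ and $\varepsilon^2 k_2\int h^{\alpha-2s-4}f_{\varepsilon}^3h_x^6$ with sign-indefinite coefficients, so the identity (\ref{H:8}) is \emph{not} a pure sum of squares at fixed $\varepsilon$. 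Showing that these terms vanish as $\varepsilon\to 0$ is where the paper does real work: a Young-inequality splitting of $\varepsilon z^{\alpha+s-n}/(z^{s-n}+\varepsilon)^2$ extracts a factor $\varepsilon^{\alpha/(s-n)}$ for $\alpha>0$ (resp.\ $\varepsilon^{(n-2+\alpha-\beta)/(s-n)}$ for $\alpha<0$), which must then be paired with the uniform bound on $\iint h_{\varepsilon}^{n-4}h_{\varepsilon,x}^6$ coming from Lemma~\ref{A.3} together with (\ref{C:ddd2'}) (resp.\ with the $\beta$-entropy regularity (\ref{Linf_H2-2})). Without this step the inequality (\ref{H:1}), and hence (\ref{E:1}), does not follow; the paper explicitly flags this control of the regularization error as the essential difference from Laugesen's classical-solution argument, and your proposal does not address it.
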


\begin{remark}[Extra Regularity]
In particular, the extra regularity $h^{\frac{\alpha + 2}{2}} \in
L^{\infty}(0,T; H^1(\Omega))$ follows directly from
Theorem~\ref{D:Th1}. Hence, $h^{\frac{\alpha + 2}{2}}(\cdot,T) \in
H^1(\Omega)$ for almost all $T \in [0,T_{loc}^{(\beta)}]$ and
therefore $h^{\frac{\alpha + 2}{2}}(\cdot,T) \in
C^{1/2}(\overline{\Omega})$ for almost all $T \in
[0,T_{loc}^{(\beta)}]$. Assume that $T_0$ is chosen such that
$h^{\frac{\alpha + 2}{2}}(\cdot,T_0) \in C^{1/2}(\overline{\Omega})$
and $h(x_0,T_0) = 0$ at some $x_0 \in \bar{\Omega}$. Then there
exists a constant $L$ such that
$$
h^{\frac{\alpha + 2}{2}}(x,T_0) = |h^{\frac{\alpha +
2}{2}}(x,T_0)-h^{\frac{\alpha + 2}{2}}(x_0,T_0)| \leq L |
x-x_0|^{1/2}.
$$
Hence $h(x,T_0) \leq L^{\frac{2}{\alpha + 2}} |
x-x_0|^{\frac{1}{\alpha + 2}}$, i.\,e. $h(.,T) \in
C^{\frac{1}{\alpha + 2}}(\bar{\Omega})$ for almost every $T \in
[0,T_{loc}^{(\beta)}]$.
\end{remark}

\begin{remark}[Rate of decrease]
For $a_1 \leq 0$ and $1/2 < n < 3$ we can generalize the results
from \cite[Theorem~1.1]{CarUl} in the following way:
$$
\int\limits_{\Omega} {h^{\alpha} h_{x}^2(x,t)\,dx} \leq C (1 +
t)^{- \frac{1}{2}} \text{ for }  \tfrac{n - 4}{2} \leq \alpha < 0,
$$
whence $ \| h - \bar{h} \|_{\infty} \leq C (1 + t)^{-
\frac{1}{4}}$ for any nonnegative strong solution $h$. Here $C = C
(a_1, \alpha, n, \bar{h}, \mathcal{E}^{(\alpha)}_{0}(0))$, and
$\bar{h} = \frac{1}{|\Omega|}\|h_0\|_1$. The proof is similar to
\cite{CarUl}.
\end{remark}

\subsection{Regularized Problem}\label{RegularizedProblem}

Given $\delta, \varepsilon > 0$, a regularized parabolic problem,
similar to that of Bernis and Friedman \cite{B8}, is considered:
\begin{numcases}
{(\textup{P}_{\delta,\varepsilon})}
 h_t  + \left({ f_{\delta\varepsilon}(h) (h_{xxx} + a_1 D''_{\varepsilon}(h)
 h_x)}\right)_x = 0, \qquad \hfill \label{D:1r'}\\
\tfrac{\partial^{i} h}{\partial x^i}(-a,t) = \tfrac{\partial^{i}
h}{\partial x^i}(a,t) \text{ for }
t > 0,\,i= \overline{0,3} , \hfill \label{D:2r'}\\
\qquad  \qquad h(x,0) = h_{0,\delta \varepsilon}(x), \hfill
\label{D:3r'}
\end{numcases}
where
\begin{equation}\label{D:reg1}
f_{\delta \varepsilon} (z): = f_{\varepsilon} (z) + \delta =
\tfrac{|z|^{s + n}}{|z|^s + \varepsilon |z|^n}+ \delta,\
D''_{\varepsilon}(z): = \tfrac{|z|^{m - n}}{1 + \varepsilon |z|^{m
- n}}
\end{equation}
$\forall\, z \in \mathbb{R}^1,\ \varepsilon >0,\ s \geqslant 4$.
The $\delta>0$ in (\ref{D:reg1}) makes the problem (\ref{D:1r'})
regular (i.e. uniformly parabolic). The parameter $\varepsilon$ is
an approximating parameter which has the effect of increasing the
degeneracy from $f(h) \sim |h|^n$ to $f_{\varepsilon}(h) \sim
h^s$. The nonnegative initial data, $h_0$, is approximated via
\begin{equation}\label{D:inreg}
\begin{gathered}
h_{0,\delta \varepsilon} \in C^{4+\gamma}(\Omega), \ h_{0,\delta
\varepsilon} \geqslant h_{0,\delta} + \varepsilon^\theta  \text{
for some } 0 < \theta < \tfrac{2}{2s -
3},\\
\tfrac{\partial^{i} h_{0,\delta \varepsilon}}{\partial x^i}(-a) =
\tfrac{\partial^{i}h_{0,\delta \varepsilon}}{\partial x^i}(a)
\text{ for } i=\overline{0,3}, \\
h_{0,\delta \varepsilon} \to h_{0}  \text{ strongly in }
H^1(\Omega) \text{ as } \delta, \varepsilon \to 0.
\end{gathered}
\end{equation}
 The $\varepsilon$ term in (\ref{D:inreg}) ``lifts'' the initial data so that it will be positive even if
$\delta = 0$ and the $\delta$ is involved in smoothing the initial
data from $H^1(\Omega)$ to $C^{4+\gamma}(\Omega)$.

\emph{Sketch of Proof}: By E\u{i}delman \cite[Theorem 6.3,
p.302]{Ed}, the regularized problem has the unique classical
solution $h_{\delta \varepsilon} \in C_{x,t}^{4+\gamma,1+\gamma/4}(
\Omega \times [0, \tau_{\delta \varepsilon}])$ for some time
$\tau_{\delta \varepsilon}
> 0$.
For any fixed values of  $\delta$ and $\varepsilon$, by E\u{i}delman
\cite [Theorem 9.3, p.316]{Ed} if one can prove a uniform in time an
a priori bound $|h_{\delta \varepsilon}(x,t)| \leq A_{\delta
\varepsilon}<\infty$ for some longer time interval $[0,T_{loc,\delta
\varepsilon}] \quad (T_{loc,\delta \varepsilon}
> \tau_{\delta \varepsilon}$) and for all $x \in \Omega$ then
Schauder-type interior estimates \cite [Corollary 2, p.213] {Ed}
imply that the solution $h_{\delta \varepsilon}$ can be continued
in time to be in $C_{x,t}^{4+\gamma,1+\gamma/4}( \Omega \times
[0,T_{loc,\delta \varepsilon}])$.

Although the solution $h_{\delta \varepsilon}$ is initially
positive, there is no guarantee that it will remain nonnegative.
The goal is to take $\delta \to 0$, $\varepsilon \to 0$ in such a
way that 1) $T_{loc,\delta \varepsilon} \to T_{loc} > 0$, 2) the
solutions $h_{\delta \varepsilon}$ converge to a (nonnegative)
limit, $h$, which is a generalized weak solution, and 3) $h$
inherits certain a priori bounds.  This is done by proving various
a priori estimates for $h_{\delta \varepsilon}$ that are uniform
in $\delta$ and $\varepsilon$ and hold on a time interval
$[0,T_{loc}]$ that is independent of $\delta$ and $\varepsilon$.
As a result, $\{ h_{\delta \varepsilon} \}$ will be a uniformly
bounded and equicontinuous (in the $C_{x,t}^{1/2,1/8}$ norm)
family of functions in $\bar{\Omega} \times [0, T_{loc}]$. Taking
$\delta \to 0$ will result in a family of functions $\{
h_{\varepsilon} \}$ that are classical, positive, unique solutions
to the regularized problem with $\delta = 0$. Taking $\varepsilon
\to 0$ will then result in the desired generalized weak solution
$h$. This last  step is where the possibility of non-unique weak
solutions arise; see \cite{B2} for simple examples of how such
constructions applied to $h_t = - (|h|^n h_{xxx})_x$ can result in
two different solutions arising from the same initial data.

\subsection{Dissipation of energy for positive solutions}\label{H}

\begin{figure}[ht] \label{Figure1}
\begin{center}
\includegraphics[height= 6 cm] {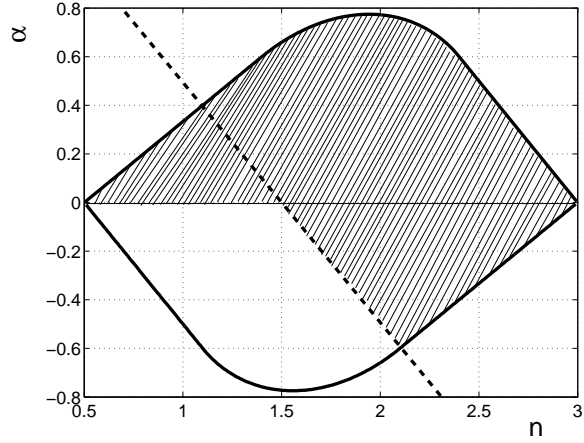}
\end{center}
\caption{ The dissipation region computed numerically by Matlab:
$\alpha$ versus $n$. The dashed line corresponds to $\alpha = 3/2 -
n$.}
\end{figure}

\begin{lemma}\label{H:lem1}
Let $\alpha $ belong to the full domain shown on Figure~1, and
\begin{equation} \label{Energy-new0}
\mathcal{E}_{\varepsilon}^{(\alpha)}(T) := \int\limits_{\Omega} {
\{\tfrac{1}{2} h^{\alpha} h_{x}^2(x,T) - a_1
\tilde{D}_{\varepsilon}(h(x,T)) \}\,dx},
\end{equation}
where $\tilde{D}''_{\varepsilon}(z):= z^{\alpha}
D''_{\varepsilon}(z)$. Then the unique positive classical solution $
h_{\varepsilon}$ of the problem ($P_{0,\varepsilon}$) satisfies
\begin{multline}\label{H:1}
\mathcal{E}^{(\alpha)}_{\varepsilon}(T) \leqslant
\mathcal{E}^{(\alpha)}_{\varepsilon}(0) + \mu \iint \limits_{Q_T}
{h^{\alpha - 2}f_{\varepsilon}(h)
D''_{\varepsilon}(h)h_x^4 dx dt} + \\
\varepsilon\,k_1 \int\limits_{\Omega} { h^{\alpha - s - 4}
f_{\varepsilon}^2(h) h_x^6 \,dx} + \varepsilon^2
k_2\int\limits_{\Omega} { h^{\alpha - 2s - 4} f_{\varepsilon}^3(h)
h_x^6 \,dx},
\end{multline}
where $k_i = k_i (\alpha, n,s)$ are constants, and $\mu = \mu
(\alpha, a_1)$ such that $\mu (0,a_1) = 0$ and $\mu (\alpha,a_1)
\leqslant 0$ for $a_1 \leqslant 0$.
\end{lemma}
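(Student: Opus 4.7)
The plan is to exploit the smoothness and strict positivity of $h_\varepsilon$ (which stays bounded below by $\varepsilon^\theta$) so that every integration by parts below is classical, with boundary terms vanishing by periodicity. First I form the variational derivative
$$ \tfrac{\delta \mathcal{E}_\varepsilon^{(\alpha)}}{\delta h} = -\tfrac{\alpha}{2} h^{\alpha-1} h_x^2 - h^\alpha h_{xx} - a_1 \tilde D'_\varepsilon(h), $$
multiply by $h_t$, and substitute the PDE $h_t = -(f_\varepsilon(h) F)_x$ with $F := h_{xxx} + a_1 D''_\varepsilon(h) h_x$. One integration by parts yields
$$ \frac{d}{dt}\mathcal{E}_\varepsilon^{(\alpha)} = -\int_\Omega f_\varepsilon(h)\, F \left[ h^\alpha F + \tfrac{\alpha(\alpha-1)}{2} h^{\alpha-2} h_x^3 + 2\alpha h^{\alpha-1} h_x h_{xx} \right] dx, $$
and the diagonal piece $-\int f_\varepsilon h^\alpha F^2 \le 0$ will simply be dropped.

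Next I expand $F$ in the cross terms and integrate by parts repeatedly to clear every factor of $h_{xxx}$ and every $h_{xx}^3$, using the periodic identities $\int g(h) h_x^k h_{xx}\,dx = -\tfrac{1}{k+1}\int g'(h) h_x^{k+2}\,dx$ and $\int g(h) h_{xx}^3\,dx + 2\int g(h) h_x h_{xx} h_{xxx}\,dx = -\int g'(h) h_x^2 h_{xx}^2\,dx$. This reduces the integrand to a combination of $h^{\alpha-2} h_x^2 h_{xx}^2$, $h^{\alpha-3} h_x^4 h_{xx}$, $h^{\alpha-4} h_x^6$, plus $a_1$-dependent analogs. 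Factoring out $f_\varepsilon(h) h^{\alpha-2} h_x^2$, the $a_1$-free contribution becomes $f_\varepsilon(h) h^{\alpha-2} h_x^2 \cdot Q(h_{xx}, h_x^2/h)$ for an explicit quadratic form $Q = Q(\alpha, n)$. This is precisely Laugesen's quadratic form, and by \cite{Laugesen} it is pointwise nonpositive whenever $\alpha$ lies in the region of Figure~1, so this term can be dropped from the upper bound.

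The $a_1$-dependent terms assemble, after the same manipulations, into a single integrand of the form $\mu(\alpha, a_1)\, h^{\alpha-2} f_\varepsilon(h) D''_\varepsilon(h) h_x^4$: the value $\mu(0, a_1) = 0$ follows because the $\alpha = 0$ case is the standard energy identity, which produces no $h_x^4$ residue, and $\mu(\alpha, a_1) \le 0$ for $a_1 \le 0$ follows because the cross-terms then preserve the sign of the leading quadratic form. Finally, in every integration by parts the regularization forces the appearance of $f_\varepsilon'(z)$ and $f_\varepsilon''(z)$ rather than $n z^{n-1}$ and $n(n-1) z^{n-2}$. From the explicit formula $f_\varepsilon(z) = z^{s+n}/(z^s + \varepsilon z^n)$ one computes $f_\varepsilon'(z) - n f_\varepsilon(z)/z = (s-n)\varepsilon f_\varepsilon^2(z) z^{-s-1}$, and similarly an $O(\varepsilon^2) f_\varepsilon^3(z) z^{-2s-1}$ remainder for $f_\varepsilon''(z) - n(n-1) f_\varepsilon(z)/z^2$. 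Substituting these decompositions, the ``main'' parts complete the quadratic form $Q$ above, while the explicit remainders produce exactly the correction integrals $\varepsilon k_1 \int h^{\alpha-s-4} f_\varepsilon^2 h_x^6$ and $\varepsilon^2 k_2 \int h^{\alpha-2s-4} f_\varepsilon^3 h_x^6$. Integrating in $t$ from $0$ to $T$ yields \eqref{H:1}. The main obstacle is the algebraic bookkeeping: verifying that, after all the integrations by parts, the $a_1$-free residue is exactly Laugesen's quadratic form and that the $a_1$-dependent pieces collapse into a single coefficient $\mu$ with the stated sign properties.
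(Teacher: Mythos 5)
Your first step (testing the equation with $-\tfrac{\alpha}{2}h^{\alpha-1}h_x^2 - h^\alpha h_{xx} - a_1\tilde D'_\varepsilon(h)$ and integrating by parts once) matches the paper's identity (\ref{H:2}). But the strategy you adopt afterwards --- drop the diagonal term $-\int h^\alpha f_\varepsilon F^2$, then integrate the cross terms by parts until no $h_{xxx}$ remains, and conclude from \emph{pointwise} nonpositivity of a quadratic form $Q(h_{xx},h_x^2/h)$ --- does not go through, and it is not Laugesen's mechanism. The obstruction is the cross term $-2\alpha\int h^{\alpha-1}f_\varepsilon(h)\,h_x h_{xx}h_{xxx}\,dx$. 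Your identity $\int g\,h_{xx}^3 + 2\int g\,h_x h_{xx}h_{xxx} = -\int g' h_x^2 h_{xx}^2$ only converts this term into $\pm\alpha\int g\,h_{xx}^3$ plus quadratic residue; the cubic term $\int g\,h_{xx}^3$ is irreducible (any further integration by parts reintroduces $h_{xxx}$), is not a quadratic form in $(h_{xx},h_x^2/h)$, and has no sign. So after discarding $-R^2$ you are left with an uncontrolled contribution. Moreover, even the reducible residue cannot be made \emph{pointwise} nonpositive on the whole Figure~1 region: Laugesen's region is characterized by nonpositivity of an \emph{integrated} quadratic form after exploiting the moment constraint $SL = -\tfrac{1}{5}(\alpha+n-3)L^2 + O(\varepsilon)$ (the paper's (\ref{H:4})), which is strictly weaker than pointwise nonpositivity of the integrand.

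The missing idea is that the $F^2$ term must be \emph{kept} and used to complete a square in $L^2$. Writing $R,S,L,N$ for the $L^2$ quantities as in the paper, the identity reads $\tfrac{d}{dt}\mathcal{E}^{(\alpha)}_\varepsilon = -R^2 - 2\alpha RS - \tfrac{\alpha(\alpha-1)}{2}RL$; one never integrates $RS$ by parts (that is what would generate $\int g\,h_{xx}^3$), but instead absorbs it into $-(R+\alpha S+\kappa L)^2$ with a free parameter $\kappa$, and only the products $RL$ and $SL$ are converted by integration by parts into $S^2$, $L^2$, $N^2$ and the $\varepsilon$-correction integrals via $f'_\varepsilon(z)=nz^{-1}f_\varepsilon(z)+\varepsilon(s-n)z^{-(s+1)}f_\varepsilon^2(z)$ (applied twice, which is where the $\varepsilon^2 f_\varepsilon^3$ terms come from --- no $f''_\varepsilon$ is needed). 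The free parameter $\kappa$ is then chosen so that the residual coefficients $\beta$ and $\gamma$ are nonpositive; this choice is exactly what defines the admissible region, and it is also what isolates the single coefficient $\mu=a_1(2\kappa-\tfrac{\alpha}{2}(\alpha-1))$ in front of $N^2$ with the stated sign properties. Without the completed square there is no $\kappa$ to tune, and the argument cannot close.
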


Note that, although we use the same convenient notations introduced
in \cite{Laugesen}, the proof of Lemma~\ref{H:lem1} has essential
differences from the proof of Theorem~1 of \cite{Laugesen}. Indeed,
we introduce new ideas in order to estimate the lower-order term in
the equation (\ref{D:1r'}). In particular, the new quantity $N$ is
introduced, the quantity $R$ is modified, and so are the terms
involving the regularization parameter $\varepsilon$ in (\ref{H:8}).

\begin{proof}[Proof of Lemma~\ref{H:lem1}]
To prove the bound (\ref{H:1}), multiply (\ref{D:1r'}) with $\delta
= 0$ by $ - \frac{ \alpha}{2}  h^{\alpha - 1} h_x^2 - h^{\alpha}
h_{xx} - a_1 \tilde{D}'_{\varepsilon}(h) $, integrate over $\Omega$,
use integration by parts, apply the periodic boundary conditions
(\ref{D:2r'}), to find
\begin{multline}\label{H:2}
\tfrac{d}{dt}\mathcal{E}^{(\alpha)}_{\varepsilon}(t) = -
\int\limits_{\Omega} {h^{\alpha} f_{\varepsilon}(h) (h_{xxx} +
a_1 D''_{\varepsilon}(h)h_x)^2 \,dx} - \\
\tfrac{\alpha }{2} (\alpha - 1) \int\limits_{\Omega} { h^{\alpha -
2} h_x^3 f_{\varepsilon}(h) (h_{xxx} + a_1
D''_{\varepsilon}(h)h_x) \,dx} - \\
2 \alpha \int\limits_{\Omega} { h^{\alpha - 1} h_x h_{xx}
f_{\varepsilon}(h) (h_{xxx} + a_1 D''_{\varepsilon}(h)h_x) \,dx}.
\end{multline}
The equality (\ref{H:2})  can be rewritten as
\begin{equation}\label{H:3}
\tfrac{d}{dt}\mathcal{E}^{(\alpha)}_{\varepsilon}(t) = - R^2 -
2\alpha \,RS - \tfrac{\alpha }{2}(\alpha - 1)\,RL,
\end{equation}
where the quantities
$$
R := \langle (h^{\alpha}f_{\varepsilon}(h))^{1/2} (h_{xxx} + a_1
D''_{\varepsilon}(h)h_x) | = |
(h^{\alpha}f_{\varepsilon}(h))^{1/2} (h_{xxx} +  a_1
D''_{\varepsilon}(h)h_x) \rangle ,
$$
$$
S := \langle (h^{\alpha - 2}f_{\varepsilon}(h))^{1/2} h_x h_{xx} | =
| (h^{\alpha - 2}f_{\varepsilon}(h))^{1/2} h_x h_{xx} \rangle ,
$$
$$
L := \langle (h^{\alpha - 4}f_{\varepsilon}(h))^{1/2} h_x^3 | = |
(h^{\alpha - 4}f_{\varepsilon}(h))^{1/2} h_x^3 \rangle,
$$
$$
N := \langle (h^{\alpha - 2}f_{\varepsilon}(h)
D''_{\varepsilon}(h))^{1/2} h_x^2 | = | (h^{\alpha -
2}f_{\varepsilon}(h) D''_{\varepsilon}(h))^{1/2} h_x^2 \rangle,
$$
each represent half of an inner product in $L^2 (\Omega)$. We will
need the following integration by parts formulas
\begin{multline}\label{H:4}
SL = - \tfrac{1}{5}(\alpha - 3) \int\limits_{\Omega} { h^{\alpha -
4} f_{\varepsilon}(h) h_x^6 \,dx} - \tfrac{1}{5}
\int\limits_{\Omega} { h^{\alpha - 3} f'_{\varepsilon}(h) h_x^6
\,dx} = \\
 - \tfrac{1}{5}(\alpha + n - 3) L^2 - \tfrac{1}{5}\varepsilon(s -n)
\int\limits_{\Omega} { h^{\alpha - s - 4} f_{\varepsilon}^2(h) h_x^6
\,dx},
\end{multline}
\begin{multline}\label{H:5}
RL = -(\alpha + n - 2 ) SL - \varepsilon(s -n) \int\limits_{\Omega}
{ h^{\alpha - s - 3} f_{\varepsilon}^2(h)
h_x^4 h_{xx}\,dx} - 3 S^2 + a_1 N^2 = \\
\tfrac{1}{5}(\alpha + n - 2 )(\alpha + n - 3) L^2 - 3 S^2 + a_1
N^2 - \varepsilon(s -n) \int\limits_{\Omega} { h^{\alpha - s - 3}
f_{\varepsilon}^2(h)
h_x^4 h_{xx}\,dx} + \\
\tfrac{1}{5}\varepsilon(s -n)(\alpha + n - 2 )\int\limits_{\Omega} {
h^{\alpha - s - 4} f_{\varepsilon}^2(h) h_x^6 \,dx} =
\tfrac{1}{5}(\alpha + n - 2 )(\alpha + n - 3) L^2
- 3 S^2 + \\
a_1 N^2  + \tfrac{1}{5}\varepsilon(s -n)(2\alpha + 3n - s -
5)\int\limits_{\Omega} { h^{\alpha - s - 4} f_{\varepsilon}^2(h)
h_x^6 \,dx} + \\
\tfrac{2}{5}\varepsilon^2(s -n)^2 \int\limits_{\Omega} { h^{\alpha -
2s - 4} f_{\varepsilon}^3(h) h_x^6 \,dx}.
\end{multline}
Here we use the auxiliary equality $ f'_{\varepsilon}(z) = n z^{-1}
f_{\varepsilon}(z) + \varepsilon (s -
n)z^{-(s+1)}f_{\varepsilon}^2(z)$. Thus, from (\ref{H:3}) we have
\begin{multline}\label{H:6}
\tfrac{d}{dt}\mathcal{E}^{(\alpha)}_{\varepsilon}(t) +
\tfrac{\varepsilon^2}{5}\alpha (\alpha - 1)(s -n)^2
\int\limits_{\Omega} { h^{\alpha - 2s - 4} f_{\varepsilon}^3(h)
h_x^6 \,dx} + \tfrac{a_1}{2}\alpha (\alpha - 1)N^2 = \\
- R^2 - 2\alpha \,RS - \tfrac{\alpha }{10}(\alpha - 1)(\alpha + n
- 2 )(\alpha + n - 3) L^2 + \tfrac{3\alpha}{2}
 (\alpha - 1)S^2  + \\
\tfrac{\varepsilon}{10}\alpha (\alpha - 1)(s -n)(s - 2\alpha - 3n
+ 5)\int\limits_{\Omega} { h^{\alpha - s - 4} f_{\varepsilon}^2(h)
h_x^6 \,dx}.
\end{multline}
Our next step is to express (\ref{H:6}) as the negative of a sum of
squares to obtain the energy dissipation. To achieve this, we use
(\ref{H:4}) and (\ref{H:5}) to deduce that for all $\kappa \in
\mathbb{R}^1$,
\begin{multline}\label{H:8}
\tfrac{d}{dt}\mathcal{E}^{(\alpha)}_{\varepsilon}(t) = - (R +
\alpha \,S + \kappa \, L)^2 + \beta ( S+ \tfrac{1}{5}
(\alpha + n - 3) \, L )^2  + \gamma \, L^2 + \mu\,N^2 +\\
\varepsilon\,k_1 \int\limits_{\Omega} { h^{\alpha - s - 4}
f_{\varepsilon}^2(h) h_x^6 \,dx} + \varepsilon^2
k_2\int\limits_{\Omega} { h^{\alpha - 2s - 4} f_{\varepsilon}^3(h)
h_x^6 \,dx} ,
\end{multline}
where
\begin{multline*}
k_1 = \tfrac{2}{25}(s - n) \biggl(5 \kappa (\alpha - s + 3n -5) +
\tfrac{5\alpha}{4} (\alpha - 1)(s - 2\alpha - 3n + 5) + \\
(\alpha + n -3)( \tfrac{\alpha}{2}  (5\alpha - 3) - 6
\kappa)\biggr),\ k_2 =  \tfrac{1}{5}(s - n)^2 \biggl( 4\kappa -
\alpha (\alpha - 1) \biggr),
\end{multline*}
$$
\beta =  \tfrac{\alpha}{2}  (5\alpha - 3) - 6 \kappa , \, \ \mu =
a_1 \biggl(2\kappa - \tfrac{\alpha }{2} (\alpha - 1) \biggr),
$$
\begin{multline*}
\gamma =  \kappa^2 - \tfrac{2}{25} \kappa (\alpha + n - 3)
\bigl( 5(2-n) + 3(\alpha + n - 3) \bigr) - \\
\tfrac{\alpha}{50}  (\alpha + n - 3) \bigl( 5(\alpha - 1)(\alpha
+ n - 2) - (5 \alpha - 3)(\alpha + n - 3) \bigr)  = \\
\kappa^2 - \tfrac{6}{25} \kappa (\alpha + n - 3) \bigl( \alpha -
\tfrac{2n-1}{3} \bigr) - \tfrac{3}{50} \alpha (\alpha + n - 3)
\bigl( \alpha - \tfrac{2n-1}{3} \bigr) .
\end{multline*}
Now we have to choose the parameter $\kappa$ in such a way that
$\beta \leqslant 0$ and $\gamma \leqslant 0$. In this case, the
parameter $\mu > 0$ for $a_1 > 0$, and $\mu \leqslant 0$  for $a_1
\leqslant 0$. According to \cite{Laugesen}, we can find $\kappa$
such that $\beta \leqslant 0$, and $\gamma \leqslant 0$ when $1/2 <
n < 3$, see also Figure~1 where this region was computed numerically
by Matlab (see \cite{Laugesen} for the explicit form of the domain).

\subsection{Limit process in (\ref{H:1})}

Rewrite the integral $\iint\limits_{Q_T} {\varepsilon
h_{\varepsilon}^{\alpha - s - 4}
f_{\varepsilon}^2(h_{\varepsilon}) h_{\varepsilon,x}^6 \,dx dt}$
in the form
$$
\iint\limits_{Q_T} {\varepsilon h_{\varepsilon}^{\alpha - s - 4}
f_{\varepsilon}^2(h_{\varepsilon}) h_{\varepsilon,x}^6 \,dx dt} =
\iint\limits_{Q_T} { \tfrac{\varepsilon \,h_{\varepsilon}^{\alpha
+ s - n}}{(h_{\varepsilon}^{s -n} + \varepsilon)^2}
h_{\varepsilon}^{n - 4} h_{\varepsilon,x}^6 \,dx dt}.
$$
Using the Young's inequality
\begin{equation}\label{H:Young}
a b \leqslant \tfrac{a^p}{p} + \tfrac{b^q}{q}  \Rightarrow  p \,a
b \leqslant  a^p + (p - 1) b^q, \ \tfrac{1}{p} + \tfrac{1}{q} =1,
\end{equation}
with $a = z^{\frac{s - n}{p}}$ and $b = \bigl(
\frac{\varepsilon}{p - 1} \bigr)^{\frac{1}{q}}$, we deduce
$$
\varepsilon \tfrac{z^{\alpha + s - n}}{(z^{s - n} +
\varepsilon)^2} \leqslant \varepsilon \tfrac{z^{\alpha}}{z^{s - n}
+ \varepsilon} \leqslant \tfrac{(p-1)^{\frac{1}{q}}}{p} \tfrac{
\varepsilon \, z^{\alpha}}{z^{\frac{s - n}{p}}
\varepsilon^{\frac{1}{q}}} = \tfrac{(p-1)^{\frac{1}{q}}}{p}
\varepsilon^{\frac{q - 1}{q}} z^{\frac{p\alpha - s + n }{p}},
$$
choosing $p = \frac{s - n}{\alpha} > 1$ and $q = \frac{s - n}{s -n
- \alpha}  > 1$ ($\Rightarrow 0 < \alpha < s - n$), we find
$$
\varepsilon \tfrac{z^{\alpha + s - n}}{(z^{s - n} +
\varepsilon)^2} \leqslant \tfrac{\alpha}{s - n} \bigl( \tfrac{ s -
n - \alpha}{\alpha} \bigr)^{\frac{s - n - \alpha}{s - n}}
\varepsilon^{\frac{\alpha}{s - n}}.
$$
Similarly, we deal with the integral $\varepsilon^2
\int\limits_{\Omega} { h_{\varepsilon}^{\alpha - 2s - 4}
f_{\varepsilon}^3(h) h_{\varepsilon,x}^6 \,dx}$. Due to
Lemma~\ref{A.3} and (\ref{C:ddd2'}), $ \iint\limits_{Q_T}
{h_{\varepsilon}^{n - 4}h_{\varepsilon,x}^6 \,dx dt} $ is
uniformly bounded then
\begin{multline}\label{H:lp1}
\Bigl | \iint\limits_{Q_T} { ( k_1 \varepsilon
h_{\varepsilon}^{\alpha - s - 4}
f_{\varepsilon}^2(h_{\varepsilon}) + k_2 \varepsilon^2
h_{\varepsilon}^{\alpha - 2s - 4}
f_{\varepsilon}^3(h_{\varepsilon})) h_{\varepsilon,x}^6 \,dx dt}
\Bigr | \leqslant \\
C\,\varepsilon^{\frac{\alpha}{s - n}} \iint\limits_{Q_T}
{h_{\varepsilon}^{n - 4}h_{\varepsilon,x}^6 \,dx dt} \leqslant
C\,\varepsilon^{\frac{\alpha}{s - n}},
\end{multline}
where the positive constant $C$ is independent of $\varepsilon$.
Letting $\varepsilon \to 0$, from (\ref{H:lp1}) we obtain
\begin{equation}\label{H:lp2}
( k_1 \varepsilon h_{\varepsilon}^{\alpha - s - 4}
f_{\varepsilon}^2(h_{\varepsilon}) + k_2 \varepsilon^2
h_{\varepsilon}^{\alpha - 2s - 4}
f_{\varepsilon}^3(h_{\varepsilon})) h_{\varepsilon,x}^6 \to 0
\text{ in } L^1(Q_T)
\end{equation}
for $0 < \alpha < s - n$.

Now, we show (\ref{H:lp2}) for the case of $\alpha < 0$. Rewrite
the integral $\iint\limits_{Q_T} {\varepsilon
h_{\varepsilon}^{\alpha - s - 4}
f_{\varepsilon}^2(h_{\varepsilon}) h_{\varepsilon,x}^6 \,dx dt}$
in the form
$$
\iint\limits_{Q_T} {\varepsilon h_{\varepsilon}^{\alpha - s - 4}
f_{\varepsilon}^2(h_{\varepsilon}) h_{\varepsilon,x}^6 \,dx dt} =
\iint\limits_{Q_T} { \tfrac{\varepsilon \,h_{\varepsilon}^{s -
2}}{(h_{\varepsilon}^{s -n} + \varepsilon)^2}
h_{\varepsilon}^{\alpha - 2} h_{\varepsilon,x}^6 \,dx dt}.
$$
Using the inequality (\ref{H:Young}) with $a = z^{\frac{s -
n}{p}}$ and $b = \bigl( \frac{\varepsilon}{p - 1}
\bigr)^{\frac{1}{q}}$, we obtain
$$
\varepsilon \tfrac{z^{s - 2}}{(z^{s - n} + \varepsilon)^2}
\leqslant \tfrac{(p-1)^{\frac{2}{q}}}{p^2} \tfrac{ \varepsilon \,
z^{s-2}}{z^{\frac{2(s - n)}{p}} \varepsilon^{\frac{2}{q}}} =
\tfrac{(p-1)^{\frac{2}{q}}}{p^2} \varepsilon^{\frac{q - 2}{q}}
z^{\frac{p(s-2) - 2(s - n)}{p}},
$$
choosing $p = \frac{2(s - n)}{s - 2 + \alpha - \beta} < 2$ and $q
= \frac{2(s - n)}{2(s -n) - s + 2 - \alpha + \beta}  > 2$
($\Rightarrow n > 2 - \alpha + \beta$), we find
$$
\varepsilon \tfrac{z^{s - 2}}{(z^{s - n} + \varepsilon)^2}
\leqslant \bigl( \tfrac{s - 2 + \alpha - \beta}{2(s - n)}\bigr)^2
\bigl( \tfrac{ s - 2(n -1) - \alpha + \beta}{s -2 + \alpha -
\beta} \bigr)^{\frac{s - 2(n - 1)- \alpha + \beta}{s - n}}
\varepsilon^{\frac{n-2 + \alpha -\beta}{s - n}} z^{\beta -
\alpha},
$$
where $\beta \in (-1/2,1)$ follows from (\ref{Linf_H2-2}).
Similarly, we deal with the integral $\varepsilon^2
\int\limits_{\Omega} { h_{\varepsilon}^{\alpha - 2s - 4}
f_{\varepsilon}^3(h) h_{\varepsilon,x}^6 \,dx}$. Due to $h \in
L^{\infty}(0,T; H^1(\Omega))$ and (\ref{Linf_H2-2}), $
\iint\limits_{Q_T} {h_{\varepsilon}^{\alpha - 2}h_{\varepsilon,x}^6
\,dx dt} $ is uniformly bounded then
\begin{multline}\label{H:lp1-00}
\Bigl | \iint\limits_{Q_T} { ( k_1 \varepsilon
h_{\varepsilon}^{\alpha - s - 4}
f_{\varepsilon}^2(h_{\varepsilon}) + k_2 \varepsilon^2
h_{\varepsilon}^{\alpha - 2s - 4}
f_{\varepsilon}^3(h_{\varepsilon})) h_{\varepsilon,x}^6 \,dx dt}
\Bigr | \leqslant \\
C\,\varepsilon^{\frac{n-2 + \alpha -\beta}{s - n}}
\iint\limits_{Q_T} {h_{\varepsilon}^{\beta - 2}
h_{\varepsilon,x}^6 \,dx dt} \leqslant C\,\varepsilon^{\frac{n-2 +
\alpha -\beta}{s - n}}\iint\limits_{Q_T} {h_{\varepsilon}^{\beta -
2} h_{\varepsilon,x}^4 \,dx dt}
 \leqslant \\
 C\,\varepsilon^{\frac{n-2 + \alpha -\beta}{s - n}},
\end{multline}
where the positive constant $C$ is independent of $\varepsilon$.
Letting $\varepsilon \to 0$, we obtain (\ref{H:lp2}) for
$\frac{3}{2} - n < \alpha < 0$ and $\frac{3}{2} < n < 3$. In view
of the Lebesgue's theorem, we have
\begin{equation}\label{H:l3}
\iint \limits_{Q_T} {h_{\varepsilon}^{\alpha -
2}f_{\varepsilon}(h_{\varepsilon})
D''_{\varepsilon}(h_{\varepsilon}) h_{\varepsilon,x}^4 \, dx dt}
\to \iint \limits_{Q_T} {h^{\alpha + m - 2}h_x^4 \,dx dt}
\end{equation}
if $m > 0$ and $\alpha > - \frac{1}{2} - m$, due to $h \in
L^{\infty}(0,T; H^1(\Omega))$ and (\ref{Linf_H2-2}).

Integrating (\ref{H:8}) over the time interval, and letting
$\varepsilon \to 0$, in view of (\ref{H:lp2}) and (\ref{H:l3}), we
obtain (\ref{H:1}) for some subinterval $I$ for $0 \leq \alpha < 1$
and $\frac{1}{2} < n < 3$ or for $- 1 < \alpha < 0$ and $\frac{3}{2}
< n < 3$. Note that, the convergence on the left-hand side follows
from Fatou's lemma and from the corresponding a priori estimate
(see, for example, \cite{B8,BertPugh1996,BP2,T4}).
\end{proof}

\subsection{Proof of Theorem~\ref{D:Th1}}\label{E}

Taking the limit $\varepsilon \to 0$ we obtain
\begin{equation}\label{E:1}
\mathcal{E}^{(\alpha)}_{0}(T) + \gamma \iint \limits_{Q_T}
{h^{\alpha + n  - 4}h_x^6 dx dt} \leqslant
\mathcal{E}^{(\alpha)}_{0}(0) + \mu \iint \limits_{Q_T} {h^{\alpha
+ m - 2}h_x^4 dx dt}.
\end{equation}
Now, we estimate  $\iint \limits_{Q_T} {h^{\alpha + m - 2}h_x^4 dx
dt}$. Using the H\"{o}lder inequality, we obtain
\begin{equation}\label{E:2}
\iint \limits_{Q_T} {h^{\alpha + m - 2}h_x^4 dx dt} \leqslant \int
\limits_{0}^T {\Bigl( \int \limits_{\Omega} {h^{\alpha + n -
4}h_x^6 dx} \Bigr)^{\frac{2}{3}} \Bigl( \int \limits_{\Omega}
{h^{\alpha + 3m - 2n + 2}dx} \Bigr)^{\frac{1}{3}} dt}.
\end{equation}
Applying Lemma~\ref{A.4} to $v = h^{\frac{\alpha + n + 2}{6}}$
with $a = \frac{6(\alpha + 3m - 2n + 2)}{\alpha + n + 2} $, $d =
6$, $b = \frac{6}{\alpha + n + 2} < a \ ( \Rightarrow \alpha > 2n
- 3m - 1)$, $i =0$, and $j = 1$, we deduce
\begin{multline}\label{E:3}
\int \limits_{\Omega} {h^{\alpha + 3m - 2n + 2} dx } \leqslant d_1
\Bigl( \int \limits_{\Omega} {v_x^6 dx} \Bigr)^{\frac{\alpha + 3m
- 2n + 1}{\alpha + n + 7}} \Bigl( \int \limits_{\Omega} {h\,dx}
\Bigr)^{\frac{3(2\alpha + 5 m - 3n + 4)}{\alpha + n + 7}} + \\
d_2 \Bigl( \int \limits_{\Omega} {h\,dx} \Bigr)^{\alpha + 3m - 2n
+ 2} \leqslant c_1 M^{\frac{3(2\alpha + 5 m - 3n + 4)}{\alpha + n
+ 7}}  \Bigl( \int \limits_{\Omega} {h^{\alpha + n - 4}h_x^6  dx}
\Bigr)^{\frac{\alpha + 3m - 2n + 1}{\alpha + n + 7}} + \\
c_2 M^{\alpha + 3m - 2n + 2}.
\end{multline}
Substituting (\ref{E:3}) in (\ref{E:2}), we find
\begin{multline}\label{E:4}
\iint \limits_{Q_T} {h^{\alpha + m - 2}h_x^4 dx dt} \leqslant c_1
M^{\frac{2\alpha + 5 m - 3n + 4}{\alpha + n + 7}} \int
\limits_{0}^T {\Bigl( \int \limits_{\Omega} {h^{\alpha + n -
4}h_x^6 dx} \Bigr)^{\frac{\alpha + m + 5}{\alpha + n + 7}} dt} +
\\
c_2 M^{\frac{\alpha + 3m - 2n + 2}{3}} \int \limits_{0}^T {\Bigl(
\int \limits_{\Omega} {h^{\alpha + n - 4}h_x^6 dx}
\Bigr)^{\frac{2}{3}} dt}.
\end{multline}

If $m < n + 2$ then, using Young's inequality, from (\ref{E:4}) we
arrive at
\begin{multline}\label{E:5}
\iint \limits_{Q_T} {h^{\alpha + m - 2}h_x^4 dx dt} \leqslant
\epsilon \iint \limits_{Q_T} {h^{\alpha + n  - 4}h_x^6 dx dt} +\\
C(\epsilon) T(c_1 M^{\frac{2\alpha + 5 m - 3n + 4}{n + 2 - m}} +
c_2 M^{\alpha + 3m - 2n + 2}) .
\end{multline}
Substituting (\ref{E:5}) in (\ref{E:1}), and choosing $\epsilon$
small enough, we obtain
\begin{equation}\label{E:1-2}
\mathcal{E}^{(\alpha)}_{0}(T) \leqslant
\mathcal{E}^{(\alpha)}_{0}(0) +  T(C_1 M^{\frac{2\alpha + 5 m - 3n
+ 4}{n + 2 - m}} + C_2 M^{\alpha + 3m - 2n + 2})
\end{equation}
for $\alpha > 2n - 3m - 1$ and $m < n + 2$. Here $C_2 = 0$ if
$\Omega$ is unbounded or $h$ is compactly supported. In particular,
if $0 < \alpha + 3m - 2n + 2 \leqslant 1$, i.\,e. $2n - 3m - 2 <
\alpha \leqslant 2n - 3m - 1$ then, using the H\"{o}lder inequality
and applying Young's inequality, from (\ref{E:2}) we obtain
\begin{multline}\label{E:2-2}
\iint \limits_{Q_T} {h^{\alpha + m - 2}h_x^4 dx dt} \leqslant
\epsilon \iint \limits_{Q_T} {h^{\alpha + n  - 4}h_x^6 dx dt} +\\
C(\epsilon)|\Omega|^{2n - 3m - 1 - \alpha} T\, M^{\alpha + 3 m -
2n + 2}
\end{multline}
for $2n - 3m - 2 < \alpha \leqslant 2n - 3m - 1$. Substituting
(\ref{E:2-2}) in (\ref{E:1}), and choosing $\epsilon$ small enough,
we obtain
\begin{equation}\label{E:1-4}
\mathcal{E}^{(\alpha)}_{0}(T) \leqslant
\mathcal{E}^{(\alpha)}_{0}(0) +  C_3 T\, M^{\alpha + 3m - 2n + 2}.
\end{equation}

If $m = n + 2$ then, using Young's inequality, from (\ref{E:4}) we
deduce
\begin{multline}\label{E:6}
\iint \limits_{Q_T} {h^{\alpha + m - 2}h_x^4 dx dt} \leqslant c_1
M^{2} \iint \limits_{Q_T} {h^{\alpha + n - 4}h_x^6 dx dt } +
\\
\epsilon \iint \limits_{Q_T} {h^{\alpha + n - 4}h_x^6 dx dt } +
C(\epsilon)T\, M^{\alpha + n + 8 } .
\end{multline}
Substituting (\ref{E:6}) in (\ref{E:1}), and choosing $\epsilon$
enough small, we obtain
\begin{equation}\label{E:1-3}
\mathcal{E}^{(\alpha)}_{0}(T) \leqslant
\mathcal{E}^{(\alpha)}_{0}(0) +   C_2 T\, M^{\alpha + n + 8}
\end{equation}
for $\alpha > - n -7$, $m = n + 2$ and $M \leqslant M_c$. Here $C_2
= 0$ if $\Omega$ is unbounded or $h$ is compactly supported.

{\bf Acknowledgement.} The authors thank R.~S.~Laugesen and
A.~Burchard for useful comments and discussions.  The research of
M.~Chu\-gu\-nova is supported by the NSERC Postdoctoral Fellowship.
R.~M.~Ta\-ra\-nets would like to thank M.~C. Pugh for the
hospitality of the University of Toronto.

\appendix

\section*{Appendix A}

\renewcommand{\thesection}{A}\setcounter{lemma}{0}

\begin{lemma}\label{A.3}
(\cite{G4,BSS}) Let $\Omega \subset \mathbb{R}^N,\ N < 6$, be a
bounded convex domain with smooth boundary, and let $n \in \bigl(2 -
\sqrt{1 -\tfrac{N}{N + 8}}, 3 \bigr)$ for $N > 1$, and $\frac{1}{2}
< n < 3$ for $N = 1$. Then the following estimate holds for any
positive functions $v \in H^2(\Omega)$ such that $\nabla v \cdot
\vec{n} = 0$ on $\partial\Omega$ and $\int\limits_{\Omega}{v^n
|\nabla\Delta v|^2} < \infty$:
\begin{multline*}
\int\limits_{\Omega} {\varphi ^6 \{ v^{n - 4}|\nabla v|^6 + v^{n -
2} |D^2 v|^2 |\nabla v|^2 \}} \leqslant \\
c
\Bigl\{\int\limits_{\Omega} {\varphi ^6 v^n |\nabla \Delta v|^2} +
\int\limits_{\{ \varphi > 0\} } {v^{n + 2} |\nabla \varphi|^6}
\Bigr\},
\end{multline*}
where $\varphi \in C^2 (\Omega)$ is an arbitrary nonnegative
function such that the tangential component of $\nabla \varphi$ is
equal to zero on $\partial\Omega$, and the constant $c > 0$ is
independent of $v$.
\end{lemma}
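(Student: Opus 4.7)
The strategy is to control both weighted integrals on the left-hand side by a single integration-by-parts argument that expands the right-hand integral $I := \int_{\Omega} \varphi^6 v^n |\nabla \Delta v|^2$ into an algebraic combination of the targets $L := \int_{\Omega} \varphi^6 v^{n-4}|\nabla v|^6$ and $M := \int_{\Omega} \varphi^6 v^{n-2}|D^2 v|^2|\nabla v|^2$, plus cross terms and cutoff remainders. The restriction on $n$ will emerge as the requirement that the resulting quadratic form in $\sqrt{L}$ and $\sqrt{M}$ be positive definite.

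First, since $v > 0$ we may use the chain rule freely and carry out integrations by parts without worrying about the set $\{v=0\}$. The starting computation is to rewrite $v^n \nabla \Delta v = \nabla(v^n \Delta v) - n v^{n-1}\, \Delta v \, \nabla v$ and then test against a carefully chosen linear combination of multipliers such as $\varphi^6 v^{n-2}|\nabla v|^2 \nabla v$ and $\varphi^6 v^{n-1}\Delta v\, \nabla v$. Each integration by parts moves one derivative off the highest-order factor, and the boundary terms vanish thanks to the Neumann condition $\nabla v \cdot \vec n = 0$ and the tangential condition on $\nabla \varphi$. The differentiations of the weight $\varphi^6$ produce factors of $\varphi^5 \nabla \varphi$, which by Young's inequality are absorbed either into $\epsilon(L+M)$ or into $C_\epsilon \int_{\{\varphi > 0\}} v^{n+2}|\nabla \varphi|^6$.

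The resulting bulk identity is a linear relation among $L$, $M$, $I$, and the pairing integral $P := \int_{\Omega} \varphi^6 v^{n-3}(\nabla v \cdot D^2 v\, \nabla v)|\nabla v|^2$. Using the pointwise identity $2 D^2 v\, \nabla v = \nabla |\nabla v|^2$ and integrating by parts one more time, $P$ is itself expressed in terms of $L$, $M$ and cutoff remainders. Applying Cauchy--Schwarz to the remaining symmetric pairings then yields a quadratic inequality of the form
\begin{equation*}
a_1 L + a_2 M - 2 b \sqrt{L M} \;\leq\; c_1 I + c_2 \int_{\{\varphi > 0\}} v^{n+2}|\nabla \varphi|^6,
\end{equation*}
where $a_1, a_2, b$ are explicit polynomials in $n$ and $N$ arising from the chain-rule coefficients.

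The main obstacle is purely algebraic: verifying that the quadratic form on the left is coercive, which reduces to checking the sign of the discriminant $a_1 a_2 - b^2$. A direct expansion of these coefficients produces a quadratic inequality in $n$ whose admissibility depends on $N$, and this is precisely what pins down the interval $n \in \bigl(2 - \sqrt{1 - N/(N+8)},\,3\bigr)$ for $N > 1$ and the simpler $\tfrac{1}{2} < n < 3$ for $N = 1$. Once coercivity is secured, one uses $a_1 L + a_2 M - 2 b \sqrt{LM} \geq \tfrac{a_1 a_2 - b^2}{\max(a_1, a_2)}(L + M)$ together with the absorption $\epsilon(L+M)$ on the right-hand side (choosing $\epsilon$ small compared to $a_1 a_2 - b^2$) to conclude $L + M \leq c\bigl(I + \int_{\{\varphi > 0\}} v^{n+2}|\nabla \varphi|^6\bigr)$, which is the stated bound.
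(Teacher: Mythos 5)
The paper does not prove this lemma: it is imported verbatim from the cited references (Bernis for $N=1$, Gr\"un for the weighted multidimensional version), so there is no in-paper argument to compare against. Your outline does reproduce the strategy used in those references --- integrate by parts to relate $\int\varphi^6 v^n|\nabla\Delta v|^2$ to the lower-order weighted integrals, absorb the cutoff derivatives into $\epsilon(L+M)+C_\epsilon\int_{\{\varphi>0\}}v^{n+2}|\nabla\varphi|^6$ via Young, and extract the admissible range of $n$ from positivity of a quadratic form --- so the approach is the right one.

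There are, however, two genuine gaps. First, for $N>1$ the reduction to a $2\times 2$ form in $\sqrt L$ and $\sqrt M$ is an oversimplification: after the integrations by parts one is left with several \emph{independent} second-order quantities, e.g.\ $\int\varphi^6 v^{n-2}(\Delta v)^2|\nabla v|^2$, $\int\varphi^6 v^{n-2}|D^2v\,\nabla v|^2$, $\int\varphi^6 v^{n-3}|\nabla v|^4\Delta v$ and your $P$, which cannot all be eliminated in favour of $L$ and $M$ by further integration by parts; the coercivity must be checked for this larger quadratic form, and it is exactly this $N$-dependence that produces the threshold $2-\sqrt{1-N/(N+8)}$ rather than the one-dimensional value $1/2$. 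Your sketch asserts that the interval ``emerges'' from a discriminant condition but never carries out the computation, and that computation is the entire mathematical content of the lemma --- without it nothing distinguishes the stated interval from any other. Second, the integrations by parts are not free: $v$ is only in $H^2(\Omega)$ with $\nabla\Delta v$ existing in the weighted $L^2$ sense, and for $N\ge 4$ positivity of $v$ does not give a uniform lower bound, so the formal manipulations require a regularization/approximation argument that you do not address. As a reconstruction of the known proof your proposal is a correct skeleton, but it is not yet a proof.
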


\begin{lemma}\label{A.4}
(\cite{N1})  If $\Omega  \subset \mathbb{R}^N $ is a bounded
domain with piecewise-smooth boundary, $a > 1$, $b \in (0, a),\ d
> 1,$ and $0 \leqslant i < j,\ i,j \in \mathbb{N}$, then there
exist positive constants $d_1$ and $d_2$ $(d_2 = 0 \text{ if }
\Omega$ is unbounded$)$ depending only on $\Omega ,\ d,\ j,\ b,$
and $N$ such that the following inequality is valid for every
$v(x) \in W^{j,d} (\Omega ) \cap L^b (\Omega )$:
$$
\left\| {D^i v} \right\|_{L^a (\Omega )}  \leqslant d_1 \left\|
{D^j v} \right\|_{L^d (\Omega )}^\theta  \left\| v \right\|_{L^b
(\Omega )}^{1 - \theta }  + d_2 \left\| v \right\|_{L^b (\Omega )}
,\ \theta  = \frac{{\tfrac{1} {b} + \tfrac{i} {N} - \tfrac{1}
{a}}} {{\tfrac{1} {b} + \tfrac{j} {N} - \tfrac{1} {d}}} \in \left[
{\tfrac{i} {j},1} \right)\!\!.
$$
\end{lemma}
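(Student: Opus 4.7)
The plan is to follow Nirenberg's original two-step strategy: first establish the inequality on all of $\mathbb{R}^N$ (where $d_2 = 0$), then transfer to the bounded domain $\Omega$ with piecewise-smooth boundary via a total extension operator, which is the source of the correction term $d_2 \|v\|_{L^b(\Omega)}$. The exponent $\theta$ is pinned down at the outset by dimensional analysis: rescaling $v_\lambda(x) := v(\lambda x)$ and demanding that a scale-invariant version of the inequality balance under $\lambda \to \lambda'$ forces $-i + N/a = \theta(-j + N/d) + (1-\theta) N/b$, which solves to the stated formula; the hypotheses $a > 1$, $b \in (0, a)$, $d > 1$, $0 \leq i < j$ then put $\theta \in [i/j, 1)$.

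For the whole-space case I would first prove the pure Sobolev embedding $\|v\|_{L^{d^*}(\mathbb{R}^N)} \leq C \|D^j v\|_{L^d(\mathbb{R}^N)}$ with $1/d^* = 1/d - j/N$ (when positive), via the Riesz potential representation and Hardy--Littlewood--Sobolev, and then interpolate by H\"older, $\|v\|_{L^a} \leq \|v\|_{L^{d^*}}^\theta \|v\|_{L^b}^{1-\theta}$, with $\theta$ chosen exactly to match the exponents. This handles $i = 0$. For $i \geq 1$ one applies the $i = 0$ version to $D^i v$ and chains it with the auxiliary derivative-interpolation $\|D^i v\|_{L^p} \leq C \|v\|_{L^q}^{1 - i/j} \|D^j v\|_{L^r}^{i/j}$, itself proved by repeated integration by parts on one-dimensional slices followed by H\"older, and tensorized across coordinate directions by Fubini. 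The borderline Sobolev cases $jd \geq N$ require BMO/Orlicz or $L^\infty$-endpoint substitutes and are handled by truncation combined with the same interpolation scheme.

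For bounded $\Omega$ I would fix a Stein--Calder\'on total extension operator $E: W^{j,d}(\Omega)\cap L^b(\Omega) \to W^{j,d}(\mathbb{R}^N)\cap L^b(\mathbb{R}^N)$ satisfying simultaneously
$$\|D^j(Ev)\|_{L^d(\mathbb{R}^N)} \leq C_\Omega \bigl(\|D^j v\|_{L^d(\Omega)} + \|v\|_{L^b(\Omega)}\bigr), \qquad \|Ev\|_{L^b(\mathbb{R}^N)} \leq C_\Omega \|v\|_{L^b(\Omega)}.$$
Applying the $\mathbb{R}^N$-inequality to $Ev$ and restricting to $\Omega$ yields
$$\|D^i v\|_{L^a(\Omega)} \leq C\bigl(\|D^j v\|_{L^d(\Omega)} + \|v\|_{L^b(\Omega)}\bigr)^\theta \|v\|_{L^b(\Omega)}^{1-\theta}.$$
Subadditivity of $s \mapsto s^\theta$ on $[0,\infty)$ for $\theta \in (0,1]$ splits the first factor into the two desired contributions: $d_1$ absorbs the $C^\theta$ prefactor of the $\|D^j v\|_{L^d}^\theta \|v\|_{L^b}^{1-\theta}$ term, and $d_2$ absorbs the residual piece $\|v\|_{L^b(\Omega)}^\theta \|v\|_{L^b(\Omega)}^{1-\theta} = \|v\|_{L^b(\Omega)}$. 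When $\Omega$ is unbounded, the scale-invariance of the $\mathbb{R}^N$ inequality removes any dependence on $|\Omega|$ and forces $d_2 = 0$.

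The main obstacle is two-fold: verifying that $\theta$ really lies in $[i/j, 1)$ in all the edge cases, especially when $1/d - j/N \leq 0$ and the Sobolev step must be replaced by BMO/Orlicz endpoints; and producing a single extension operator that is bounded on both $W^{j,d}$ and $L^b$ when $b$ may be less than $1$ (as is permitted by the hypothesis $b \in (0, a)$). The first obstacle is addressed by a careful case split on the sign of $1/d - j/N$; the second by the Stein extension, which commutes with truncation and therefore extends to $L^b$ for all $b > 0$. Both steps are classical but the bookkeeping around endpoints is the technical core of Nirenberg's argument.
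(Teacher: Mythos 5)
The paper does not prove this lemma: it is quoted directly from Nirenberg's note \cite{N1} and used as a black box in the proof of Theorem~\ref{D:Th1}, so there is no in-paper argument to compare yours against. Judged on its own, your outline follows the standard modern route (scaling to pin down $\theta$, whole-space Gagliardo--Nirenberg via Riesz potentials plus H\"older interpolation of $L^p$ quasi-norms, then transfer to $\Omega$ by a total extension operator whose cost produces the $d_2\|v\|_{L^b}$ term). For $b\geqslant 1$ this is a correct and essentially complete plan.

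The genuine gap is exactly where you flag a difficulty and then wave it away: the hypothesis allows $0<b<1$, and your transfer step requires an extension operator bounded from $L^b(\Omega)$ to $L^b(\mathbb{R}^N)$. The Stein--Calder\'on operator is an averaging operator, and averaging is \emph{not} bounded on $L^b$ for $b<1$: a spike of height $k$ on an interval of length $1/k$ has $L^b$ quasi-norm $k^{(b-1)/b}\to 0$, while its unit-scale average has quasi-norm of order $1$. The assertion that the operator ``commutes with truncation and therefore extends to $L^b$ for all $b>0$'' is not an argument and is false for this operator. This is precisely why Nirenberg's own proof uses no extension at all: he works directly inside the domain with elementary one-dimensional interpolation on line segments (integration by parts on slices, H\"older, Fubini over a decomposition into cubes), which is insensitive to whether $L^b$ is normable and is the actual content of the ``extended'' inequality of \cite{N1}. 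Your plan could be repaired by replacing Stein's operator with a higher-order reflection extension --- a finite linear combination of compositions with bi-Lipschitz maps, hence bounded on $L^b$ quasi-norms for every $b>0$ --- but as written the case $b<1$, which is the case that makes the lemma ``extended'' and is needed in the paper's application with $b=6/(\alpha+n+2)$, is not established. A secondary point: unboundedness of $\Omega$ does not by itself force $d_2=0$ in your scheme, since the extension step contributes a lower-order term for any domain with boundary; $d_2=0$ requires a dilation-invariant setting such as $\Omega=\mathbb{R}^N$ or a cone.
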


\def\cprime{$'$} \def\cprime{$'$} \def\cprime{$'$} \def\cprime{$'$}


\begin{thebibliography}{10}

\bibitem{B2}
Elena Beretta, Michiel Bertsch, and Roberta Dal~Passo.
\newblock Nonnegative solutions of a fourth-order nonlinear degenerate
  parabolic equation.
\newblock {\em Arch. Rational Mech. Anal.}, 129(2):175--200, 1995.

\bibitem{BSS}
Francisco Bernis.
\newblock Finite speed of propagation for thin viscous flows when {$2\leq
  n<3$}.
\newblock {\em C. R. Acad. Sci. Paris S\'er. I Math.}, 322(12):1169--1174,
  1996.

\bibitem{B8}
Francisco Bernis and Avner Friedman.
\newblock Higher order nonlinear degenerate parabolic equations.
\newblock {\em J. Differential Equations}, 83(1):179--206, 1990.

\bibitem{Bernoff}
Andrew~J. Bernoff and Andrea~L. Bertozzi.
\newblock Singularities in a modified {K}uramoto-{S}ivashinsky equation
  describing interface motion for phase transition.
\newblock {\em Phys. D}, 85(3):375--404, 1995.

\bibitem{BertPugh1996}
A.~L. Bertozzi and M.~Pugh.
\newblock The lubrication approximation for thin viscous films: regularity
and long-time behavior of weak solutions.
\newblock {\em Comm. Pure Appl. Math.}, 49(2):85--123, 1996.

\bibitem{BP1}
A.~L. Bertozzi and M.~C. Pugh.
\newblock Long-wave instabilities and saturation in thin film equations.
\newblock {\em Comm. Pure Appl. Math.}, 51(6):625--661, 1998.

\bibitem{BP2}
A.~L. Bertozzi and M.~C. Pugh.
\newblock Finite-time blow-up of solutions of some long-wave unstable thin
film equations.
\newblock {\em Indiana Univ. Math. J.}, 49(4):1323--1366, 2000.

\bibitem{BertozziBrenner}
Andrea~L. Bertozzi, Michael~P. Brenner, Todd~F. Dupont, and Leo~P.
Kadanoff.
\newblock Singularities and similarities in interface flows.
\newblock In {\em Trends and perspectives in applied mathematics}, volume 100
  of {\em Appl. Math. Sci.}, pages 155--208. Springer, New York, 1994.

\bibitem{CarUl}
E.~Carlen, S.~Ulusoy.
\newblock{An entropy dissipation-entropy estimate for a thin film type equation. }
\newblock{\em Comm. Math. Sci.}, {3}({2}):{171--178}, 2005.

\bibitem{Constantin}
P.~Constantin, T.~F.~Dupont, R.~E.~Goldstein, Leo P.~Kadanoff,
M.~J.~Shelley, and S.~M.~Zhou.
\newblock {Droplet breakup in a model of the Hele-Shaw cell}.
\newblock {\em {Physical review E}}, {47}({6}):{4169--4181}, {june} {1993}.

\bibitem{Enrhard}
P.~Ehrhard.
\newblock {The spreading of hanging drops}.
\newblock {\em {Journal of colloid and interface science}},
  {168}({1}):{242--246}, {nov} {1994}.

\bibitem{Ed}
S.~D. {\`E}{\u\i}del{\cprime}man.
\newblock {\em Parabolic systems}.
\newblock Translated from the Russian by Scripta Technica, London.
  North-Holland Publishing Co., Amsterdam, 1969.


\bibitem{G4}
G{\"u}nther Gr{\"u}n.
\newblock Droplet spreading under weak slippage: a basic result on finite
speed
  of propagation.
\newblock {\em SIAM J. Math. Anal.}, 34(4):992--1006 (electronic), 2003.

\bibitem{JM}
Ansgar~Jungel, and Danial~Matthes.
\newblock An algorithmic construction of entropies in higher-order
nonlinear PDEs.
\newblock {\em Nonlinearity}, 19:633--659, 2006.

\bibitem{Laugesen}
R.~S. Laugesen. \newblock New dissipated energies for the thin
fluid film equation.
\newblock {\em Commun. Pure Appl. Anal.},  4 (3): 613--634, 2005.

\bibitem{N1}
L.~Nirenberg.
\newblock An extended interpolation inequality.
\newblock {\em Ann. Scuola Norm. Sup. Pisa (3)}, 20:733--737, 1966.

\bibitem{T4}
A.~E. Shishkov and R.~M. Taranets.
\newblock On the equation of the flow of thin films with nonlinear convection
  in multidimensional domains.
\newblock {\em Ukr. Mat. Visn.}, 1(3):402--444, 447, 2004.



\bibitem{Tudorascu}
A.~Tudorascu.
\newblock Lubrication approximation for thin viscous films:
 asymptotic behavior of nonnegative solutions.
\newblock {\em Communications in PDE}, 32:1147--1172, 2007.

\bibitem{wit1}
Thomas~P. Witelski and Andrew~J. Bernoff.
\newblock Stability of self-similar solutions for van der {W}aals driven thin
  film rupture.
\newblock {\em Phys. Fluids}, 11(9):2443--2445, 1999.

\end{thebibliography}
\end{document}